\numberwithin{equation}{section}
\newtheorem{theorem}{Theorem}[section]
\newtheorem{corollary}[theorem]{Corollary}
\newtheorem{proposition}[theorem]{Proposition}
\newtheorem{observation}[theorem]{Observation}
\theoremstyle{definition}
\newtheorem{definition}[theorem]{Definition}
\newtheorem{example}[theorem]{Example}
\newenvironment{warning}[1][Warning.]{\begin{trivlist}
\item[\hskip \labelsep {\bfseries #1}]}{\end{trivlist}}
\newenvironment{remark}[1][Remark.]{\begin{trivlist}
\item[\hskip \labelsep {\bfseries #1}]  }{ \end{trivlist}}
\newcommand{\rmd}{\textnormal{d}}
\newcommand{\rme}{\textnormal{e}}
\newcommand{\rmh}{\textnormal{h}}
\DeclareMathOperator{\Vect}{Vect}
\DeclareMathOperator{\w}{w}
\font\black=cmbx10 \font\sblack=cmbx7 \font\ssblack=cmbx5 \font\blackital=cmmib10  \skewchar\blackital='177
\font\sblackital=cmmib7 \skewchar\sblackital='177 \font\ssblackital=cmmib5 \skewchar\ssblackital='177
\font\sanss=cmss10 \font\ssanss=cmss8 %scaled 900
\font\sssanss=cmss8 scaled 600 \font\blackboard=msbm10 \font\sblackboard=msbm7 \font\ssblackboard=msbm5
\font\caligr=eusm10 \font\scaligr=eusm7 \font\sscaligr=eusm5  \font\fraktur=eufm10
\font\sfraktur=eufm7 \font\ssfraktur=eufm5 
\font\bsymb=cmsy10 scaled\magstep2
\def\all#1{\setbox0=\hbox{\lower1.5pt\hbox{\bsymb
       \char"38}}\setbox1=\hbox{$_{#1}$} \box0\lower2pt\box1\;}
\def\exi#1{\setbox0=\hbox{\lower1.5pt\hbox{\bsymb \char"39}}
       \setbox1=\hbox{$_{#1}$} \box0\lower2pt\box1\;}
\def\tx#1{{\fam0\relax#1}}
\def\sss#1{{\fam\ssfam\relax#1}}
\def\hpb#1{\setbox0=\hbox{${#1}$}
    \copy0 \kern-\wd0 \kern.2pt \box0}
\def\vpb#1{\setbox0=\hbox{${#1}$}
    \copy0 \kern-\wd0 \raise.08pt \box0}
\def\pmb#1{\setbox0\hbox{${#1}$} \copy0 \kern-\wd0 \kern.2pt \box0}
\def\pmbb#1{\setbox0\hbox{${#1}$} \copy0 \kern-\wd0
      \kern.2pt \copy0 \kern-\wd0 \kern.2pt \box0}
\def\pmbbb#1{\setbox0\hbox{${#1}$} \copy0 \kern-\wd0
      \kern.2pt \copy0 \kern-\wd0 \kern.2pt
    \copy0 \kern-\wd0 \kern.2pt \box0}
\def\pmxb#1{\setbox0\hbox{${#1}$} \copy0 \kern-\wd0
      \kern.2pt \copy0 \kern-\wd0 \kern.2pt
      \copy0 \kern-\wd0 \kern.2pt \copy0 \kern-\wd0 \kern.2pt \box0}
\def\pmxbb#1{\setbox0\hbox{${#1}$} \copy0 \kern-\wd0 \kern.2pt
      \copy0 \kern-\wd0 \kern.2pt
      \copy0 \kern-\wd0 \kern.2pt \copy0 \kern-\wd0 \kern.2pt
      \copy0 \kern-\wd0 \kern.2pt \box0}
\mathchardef\za="710B  %\alpha
\mathchardef\zb="710C  %\beta
\mathchardef\zg="710D  %\gamma
\mathchardef\zd="710E  %\delta
\mathchardef\zve="710F %\epsilon
\mathchardef\zz="7110  %\zeta
\mathchardef\zh="7111  %\eta
\mathchardef\zvy="7112 %\theta
\mathchardef\zi="7113  %\iota
\mathchardef\zk="7114  %\kappa
\mathchardef\zl="7115  %\lambda
\mathchardef\zm="7116  %\mu
\mathchardef\zn="7117  %\nu
\mathchardef\zx="7118  %\xi
\mathchardef\zp="7119  %\pi
\mathchardef\zr="711A  %\rho
\mathchardef\zs="711B  %\sigma
\mathchardef\zt="711C  %\tau
\mathchardef\zu="711D  %\upsilon
\mathchardef\zvf="711E %\phi
\mathchardef\zq="711F  %\chi
\mathchardef\zc="7120  %\psi
\mathchardef\zw="7121  %\omega
\mathchardef\ze="7122  %\varepsilon
\mathchardef\zy="7123  %\vartheta
\mathchardef\zf="7124  %\varomega
\mathchardef\zvr="7125 %\varrho
\mathchardef\zvs="7126 %\varsigma
\mathchardef\zf="7127  %\varphi
\mathchardef\zG="7000  %\Gamma
\mathchardef\zD="7001  %\Delta
\mathchardef\zY="7002  %\Theta
\mathchardef\zL="7003  %\Lambda
\mathchardef\zX="7004  %\Xi
\mathchardef\zP="7005  %\Pi
\mathchardef\zS="7006  %\Sigma
\mathchardef\zU="7007  %\Upsilon
\mathchardef\zF="7008  %\Phi
\mathchardef\zW="700A  %\Omega
\mathchardef\zC="7009  %\Psi
\newcommand{\be}{\begin{equation}}
\newcommand{\ee}{\end{equation}}
\newcommand{\bea}{\begin{eqnarray}}
\newcommand{\eea}{\end{eqnarray}}
\def\*{{\textstyle *}}
\newcommand{\R}{{\mathbb R}}
\newcommand{\s}{{\textstyle *}}
\def\Sec{\sss{Sec}}
\def\Vect{\sss{Vect}}
\def\sT{{\sss T}}
\def\xi{\tx{i}}
\def\cN{\cal N}
\def\s*{{\scriptstyle *}}
\def\cN{\mathcal{N}}
\def\cC{\mathcal{C}}
\newcommand{\beas}{\begin{eqnarray*}}
\newcommand{\eeas}{\end{eqnarray*}}
\title{On the bundle of null cones} 
\author{Andrew James Bruce } 
   \address{Department of Mathematics,
The Computational Foundry,
Swansea University Bay Campus,
Fabian Way,
Swansea, SA1 8EN}  
   \email{andrewjamesbruce@googlemail.com}
   \date{\today}
\begin{document}
 %%%%%%%%%%%%%%%%%%%%%%%%%%%
 \maketitle
%%%%%%%%%%%%%%%%%%%%%%%%%%%
\vspace{-20pt}
%%%%%%%%%%%%%%%%%%%%
\begin{abstract}{\noindent We examine the bundle structure of the field of nowhere vanishing null vector fields on a (time-oriented) Lorentzian manifold. Sections of what we refer to as the null tangent, are by definition nowhere vanishing null vector fields. It is shown that the set of nowhere vanishing null vector fields comes equipped with a para-associative ternary partial product. Moreover, the null tangent bundle is an example of a non-polynomial graded bundle.}\\

\noindent {\Small \textbf{Keywords:} Lorentzian manifolds;~null cones;~fibre bundles;~semi-heaps}\\
\noindent {\small \textbf{MSC 2020:} 20N10;~53B30;~53C50;~58A32;~83C99}
\end{abstract}

\section{Introduction} 
According to the theory of general relativity and closely related theories of gravity, spacetime is a four-dimensional Lorentzian manifold. An important aspect of Lorentzian geometry is the field of null cones due to their r\^{o}le in the causal nature of spacetime (see \cite{Hawking:1973,Kronheimer:1967}). In this work, we construct a fibre bundle of null cones (with the zero section removed) over a connected time-oriented four-dimensional Lorentzian manifold - we refer to this bundle as the \emph{null tangent bundle} (see Definition \ref{def:NullTanBun}).
\begin{wrapfigure}{r}{0.5\textwidth}
\vspace{-35pt}
\begin{center}
\includegraphics[scale=0.5, angle= 23]{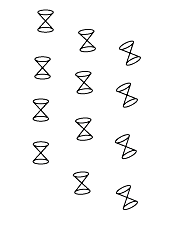}
\vspace{-35pt}
\caption{\small The null tangent bundle is constructed via attaching a split double cone to each point of a spacetime.}
\end{center}
\end{wrapfigure}
There is no Riemannian analogue of the null tangent bundle and such structures truly belong to Lorentzian geometry. Note that unit tangent bundles exist on any Lorentzian or Riemannian manifold.  We show that the null tangent bundle is an example of a natural bundle in the sense that it is canonically defined from the spacetime and requires no additional structure (see the proof of Proposition \ref{prop:SmoBun}). Moreover, the null tangent bundle of a spacetime comes with an action of the multiplicative group of strictly positive real numbers:  we refer to this action as the homothety (see Observation \ref{obs:RAction}). We have a kind of graded manifold. However, the admissible coordinate transformations are not polynomial, so we do not have a graded bundle (see \cite{Grabowski:2012}). Sections of the null tangent bundle are, by definition, nowhere vanishing future or past directed null vector fields. Recall that we do not have a vector space on the set of null vectors at any given point on spacetime. For instance, the sum of two future-directed null vectors is a timelike vector, unless the two vectors are parallel. However, we will show that the sections of the null tangent bundle come with a partial ternary operation that is para-associative, i.e., we have a partial semiheap structure (see Theorem \ref{trm:SemiHeap}). \par
 We remark that nowhere vanishing null vector fields, with other additional properties, are important in general relativity and supergravity.  For example, Robinson manifolds come equipped with a nowhere vanishing vector field whose integral curved are null geodesics (see \cite{Trautman:2002}). Other examples include pp-wave spacetimes, which are  Lorentzian manifolds that admit a covariantly constant null vector field. All these examples fall under the umbrella of null G-structures (see \cite{Papadopoulos:2019}).

\medskip

\noindent \textbf{Conventions:} We will consider four-dimensional Lorentzian manifolds $(M, g)$ of signature $(- , + , + , +)$. Recall that a vector $v \in \sT_p M$ is 
\begin{align*}
&\textnormal{timelike if} & g_p(v,v)< 0,\\
&\textnormal{null if} & g_p(v,v) =0,\\
&\textnormal{spacelike if} &  g_p(v,v)> 0.
\end{align*}
The set of null vectors at $p \in M$ forms the \emph{double cone} $\cN_p \subset \sT_pM$ (as a set). A vector field $\bm{v} \in \Vect(M)$ is said to be timelike/null/spacelike if at every point $p \in M$ the vector $\bm{v}|_p$ is timelike/null/spacelike.
\begin{definition}
A Lorentzian manifold $(M,g)$ is \emph{time-orientable} if it admits a timelike vector field $\bm{\bm{\tau}} \in \Vect(M)$.
\end{definition}
We say that a vector $v \in \sT_p M$ is future directed if $g_p(v, \bm{\tau} |_p) <0$ and past directed if $g_p(v, \bm{\tau} |_p) > 0$. Note that if $v$ is future/past directed then $-v$ is past/future directed.
\begin{definition}
A \emph{spacetime} is a connected time-oriented four-dimensional Lorentzian manifold $(M,g, \bm{\tau})$.
\end{definition}
Recall that by connected, we mean that $M$ is not homeomorphic to the union of two or more disjoint non-empty open subsets. A smooth manifold admits a Lorentzian metric if it admits a nowhere vanishing vector field. The existence of a nowhere vanishing vector field implies that $M$ must be non-compact or compact with zero Euler characteristic. 
\begin{example}
A \emph{globally hyperbolic spacetime} is of the form $M \simeq \R \times \Sigma$, where  $\Sigma$ is a three-dimensional smooth manifold.  The vector field $\partial_t$, where $t$ is the global coordinate on $\R$,  defines a timelike vector field and so $M$ is time-oriented. 
\end{example}
\begin{wrapfigure}{r}{0.5\textwidth}
\vspace{-10pt}
\begin{center}
\includegraphics[scale=0.2]{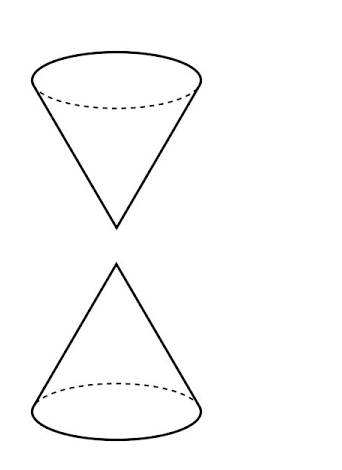}
\caption{\small The split null cone $\cC_0$.}
\end{center}
\end{wrapfigure}
Once a time-orientation has been chosen we can consistently decompose null cones at all points into the future and past directed components, $\cN_p = \cN^+_p \cup \cN^-_p$. At each point, we can remove the zero vector $\mathbf{0}_p \in \sT_p M$ to get the \emph{split null cone} $\cC_0 = \cC_0^+ \cup \cC_0^- \simeq \cN_p \setminus \{ \mathbf{0}_p\} \simeq \R^3_* \sqcup \R^3_*$. Note that the split null cone is a three dimensional smooth manifold with an atlas consisting of two charts $\big \{ (\cN^+_p , \phi_+), ~ (\cN^-_p, \phi_-)  \big\}$, with 
\begin{align}\label{eqn:ConAatlas}
\nonumber & \phi^{-1}_{\pm} \, :  ~\R^3_* \longrightarrow \cN^{\pm}_p\\
& (x,y,z)\mapsto ( \pm \sqrt{x^2 + y^2 + z^2}, x,y,z)\,.
\end{align}
We will employ vierbein fields in describing the tangent spaces. Let $\{(U_\alpha, \psi_\alpha) \}_{\in \mathcal A}$ be an atlas on $M$ and let $\mathcal{T}M$ be the tangent sheaf. Then for any $U \subset M$ in the chosen atlas we can employ vierbien fields, i.e., we have a basis $\{e_a = e_a^{\,\, \mu }(x)\partial_\mu \}$ of $\mathcal{T}M(U)$. Recall that the vierbeins satisfy $g^{\mu \nu} = e_a^{\,\, \mu} e_b^{\,\, \nu}\, \eta^{ba}$ where $\eta$ is the Minkowski metric. The basis fields transform under the (restricted) Lorentz group, i.e., $e_{a'} =  \Lambda_{a'}^{\,\, \, b} \, e_b$. The inverse vierbeins are defined via $\eta_{ab}=  e_a^{\,\, \mu} e_b^{\,\, \nu} \, g_{\nu \mu}$. Thus, using the natural basis $\{ e_a^{\,\, \mu}\partial_\mu|_p  \}$ of the tangent space $\sT_p M$ we have that $g_p(v,w) =  v^a w^b \eta_{ba}$. In particular, the components of any vector $v \in \sT_p M$ transform under Lorentz transformations as $v^{a'} = v^b \, \Lambda_b^{\,\, a'}$, where $\Lambda_b^{\,\, a'} \, \Lambda_{a'}^{\, \, a} = \delta_b^{\,\, a}$. 
\medskip 

\begin{remark}
One could further insist that a spacetime is parallelizable, i.e., we have a global basis for vector fields. It is physically reasonable that spacetimes be parallelizable.  However, we will not impose this condition in our definition of a spacetime.
\end{remark}
The algebraic structure of \emph{heaps} were introduced by Pr\"{u}fer \cite{Prufer:1924} and  Baer \cite{Baer:1929} as a set equipped with a ternary operation satisfying some natural axioms. A heap can be thought of as a group in which the identity element is forgotten. Given a group,  we can  construct a heap by defining the ternary operation as $(a,b,c) \mapsto ab^{-1}c$.   Conversely, by selecting an element in a heap, one can reduce the ternary operation to a  group operation, such that the chosen element is the identity element. \par 
There is a weaker notion of a \emph{semiheap} as  a non-empty set $H$, equipped with a ternary operation $[a,b,c] \in H$ that satisfies the \emph{para-associative law} 
\begin{equation}\label{eqn:ParaAss}
\big[ [a,b,c] , d,e \big] = \big [ a,[d,c,b],e\big] =  \big[ a,b,[c , d,e] \big]\, ,
\end{equation}
for all $a,b,c,d$ and $e \in H$. A semiheap is a \emph{heap} when all its elements are \emph{biunitary}, i.e., $[a,b,b] =a$ and  $[b,b,a] = a$, for all $a$ and $b \in H$. For more details about heaps and related structures the reader my consult Hollings \& Lawson \cite{Hollings:2017}

\section{The null tangent bundle}
\subsection{Construction of the null tangent bundle}\label{subsec:NullTan}
We now proceed to the main definition of this paper. We build the null tangent bundle following the classical construction of the tangent bundle or unit tangent bundle but now replacing the tangent spaces with spilt null cones.
\begin{definition}\label{def:NullTanBun}
Let $(M, g, \bm{\tau})$ be a spacetime, then the \emph{null tangent bundle} is defined as the disjoint union of split null cones, i.e.,
\begin{align*}
\cN M & =  \bigsqcup_{p \in M} \cN_p \setminus \{\mathbf{0}_p\} \\
&= \big\{ (p,v) ~~| ~~ p \in M, ~~ v \in \cN_p \setminus \{ \mathbf{0}_p\} \big\}\,,
\end{align*}
together with the natural projection 
\begin{align*}
\pi : & ~\cN M \longrightarrow M\\
& (p,v) \mapsto p\,.
\end{align*}
\end{definition}
\begin{warning}
The null tangent bundle is \emph{not} a vector bundle: this is clear as we do not have a zero section, and the sum of two (non-zero) null vectors is a timelike vector unless the pair of null vectors are linearly dependent. 
\end{warning}
We will on occasion use the fact that  $\cN M =  \cN M^+ \cup \cN M^- $, where $ \cN M^\pm :=  \bigsqcup_{p \in M} \, \cN_p^\pm \setminus \{ \mathbf{0}_p\}$. Clearly, $ \cN M^+ \cap \cN M^- = \emptyset$ and so $\cN M$ is  disconnected.
\begin{remark}
The null tangent bundle should be compared with the \emph{unit tangent bundle} of a (pseudo-)Riemannian manifold $\mathcal{U}M := \bigsqcup_{p \in M} \{ v \in \sT_p M ~ |~~ g_p(v,v) = 1 \}$. The fibres are diffeomorphic to $S^{n-1}$, assuming $M$ is of dimension $n$.
\end{remark}
\begin{proposition}\label{prop:SmoBun}
Let $(M, g, \bm{\tau})$ be a spacetime. The null tangent bundle $\cN M$ is a smooth fibre bundle $\pi : \cN M \rightarrow M$ with typical fibre $F_p \simeq \cC_0 = \cC_0^+ \cup \cC_0^- $.
\end{proposition}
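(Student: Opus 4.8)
The plan is to construct an explicit local trivialization over each coordinate patch $U_\alpha$ of $M$ and verify that the transition maps are smooth, thereby exhibiting $\cN M$ as a smooth fibre bundle with fibre $\cC_0$. First I would fix an atlas $\{(U_\alpha, \psi_\alpha)\}$ on $M$ equipped with vierbein fields $\{e_a\}$, so that over $U_\alpha$ every tangent vector $v \in \sT_pM$ has Lorentz components $v^a$ satisfying $g_p(v,v) = v^a v^b \eta_{ba}$. The key observation is that in these frame components the null condition becomes the \emph{fixed} Minkowski null condition $-(v^0)^2 + (v^1)^2 + (v^2)^2 + (v^3)^2 = 0$, independent of the point $p$; this is precisely what the vierbein formalism buys us. Hence the future/past split null cone $\cN_p^\pm \setminus \{\mathbf{0}_p\}$ is, in frame coordinates, identified with the model $\cC_0^\pm \simeq \R^3_*$ via the chart maps $\phi_\pm$ from equation \eqref{eqn:ConAatlas}: a point of the fibre is recorded by its spatial components $(v^1,v^2,v^3) \in \R^3_*$, with $v^0 = \pm\sqrt{(v^1)^2+(v^2)^2+(v^3)^2}$ determined by the sign of the component (i.e.\ by whether $v$ is future or past directed).

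Next I would assemble these into a local trivialization
\begin{align*}
\Psi_\alpha : \pi^{-1}(U_\alpha) \longrightarrow U_\alpha \times \cC_0, \qquad
(p, v) \mapsto \big(p, \, \phi_\pm^{-1}(v^1,v^2,v^3)\big),
\end{align*}
where the choice of $\phi_+$ versus $\phi_-$ is dictated by whether $(p,v) \in \cN M^+$ or $\cN M^-$; this is well defined and bijective on each connected component precisely because $v^0$ is recoverable from the spatial part together with the time-orientation sign, and the restriction to spatial components is injective on the split cone. One then checks $\pi = \proj_1 \circ \Psi_\alpha$, so $\Psi_\alpha$ is fibre-preserving, and that $\Psi_\alpha$ is a homeomorphism (indeed a diffeomorphism after we have a smooth structure in hand).

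The crux is the transition functions. On an overlap $U_\alpha \cap U_\beta$ the two frames differ by a restricted Lorentz transformation $e_{a'} = \Lambda_{a'}^{\,\,\,b} e_b$, so the fibre components transform as $v^{a'} = v^b \Lambda_b^{\,\,a'}$. The resulting change of trivialization is
\begin{align*}
\Psi_\beta \circ \Psi_\alpha^{-1} : (U_\alpha \cap U_\beta) \times \cC_0 \longrightarrow (U_\alpha \cap U_\beta) \times \cC_0,
\end{align*}
and I would show that its fibre part is the map sending $(x,y,z) \in \R^3_*$ to the spatial components of $\Lambda(p) \cdot (\pm\sqrt{x^2+y^2+z^2}, x, y, z)$, reading off the new sign for the $\phi_\pm$ chart. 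Smoothness follows because $\Lambda(p)$ depends smoothly on $p$ (the vierbeins are smooth, being built from the smooth metric $g$ and the chart maps), the square root is smooth on $\R^3_*$ where the argument is strictly positive, and the restricted Lorentz group preserves the two components of the cone so the sign choice is locally constant. This is where I expect the main obstacle to lie: one must verify that a \emph{restricted} (hence orthochronous, time-orientation preserving) Lorentz transformation indeed maps $\cN_p^+$ to $\cN_{p}^+$ and $\cN_p^-$ to $\cN_p^-$, so that the two sheets $\cN M^\pm$ are preserved and the trivializations are consistent with the global decomposition; this uses the time-orientation $\bm\tau$ and is exactly the point where the construction is canonical and no extra choices enter, establishing naturality. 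Finally I would note that smoothness of all transition maps equips $\cN M$ with a unique smooth structure making $\pi$ a submersion and the $\Psi_\alpha$ diffeomorphisms, which is the definition of a smooth fibre bundle with typical fibre $\cC_0 = \cC_0^+ \cup \cC_0^-$.
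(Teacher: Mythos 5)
Your proposal is correct and follows essentially the same route as the paper: local trivializations over coordinate patches via vierbein frame components, recording a fibre point by its spatial components in $\R^3_*$ with $v^0 = \pm\sqrt{(v^1)^2+(v^2)^2+(v^3)^2}$ recovered from the null condition, and transition functions given by the spatial part of the pointwise restricted Lorentz transformation relating the frames. If anything, you supply more justification than the paper does for the smoothness of the transition maps (the square root being smooth on $\R^3_*$ and the orthochronous transformations preserving the two sheets), which the paper dispatches with ``via inspection.''
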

\begin{proof}
The only part of the proposition that is not immediate is that the total space $\cN M$ is a smooth manifold. To show that the null tangent bundle of a spacetime is a (smooth) fibre bundle, we construct a natural bundle atlas of $\cN M $ inherited from an atlas of $M$. We will consider $\cN M$ as a natural bundle (see \cite{Kolar:1993}).  Let $\{ (U_ \alpha, \, , \psi_\alpha)\}_{\alpha \in \mathcal{A}}$ be an atlas of $M$ (not necessarily a maximal atlas).  From the definition of the null tangent bundle $(p, v_\pm =v_\pm^a e_a)\in \pi^{-1}(U_\alpha)$, using vierbeins. Here ``$\pm$'' signifies if the vector is future or past directed.  We then define
\begin{align*}
& \pi^{-1}(U_\alpha) \stackrel{\phi_\alpha}{\longrightarrow} \psi_\alpha(U_\alpha) \times \R^3_* \sqcup \R^3_* \\
& \phi_\alpha(p, v_\pm) = (\psi_\alpha(p), v_\pm^i) = (x^\mu, v_\pm^i)\,,
\end{align*}
where $v_\pm^i$, now interpreted as coordinates on $\cN_p^\pm \setminus \{ \mathbf{0}_p\}$, are the spacial components of $v_\pm$.   The inverse map is  (see equation \eqref{eqn:ConAatlas})
$$\phi^{-1}_\alpha (x^\mu , v_\pm^i) =  (p, ~\pm \sqrt{v_\pm^j v_\pm^k\delta_{kj}}, ~v_\pm^i)\,.$$
Thus, our natural choice of atlas is $\{(U_\alpha \times \cC_0^\pm, \phi^\pm_\alpha) \}_{\alpha \in \mathcal{A}}$ where
\begin{align*}
\phi^\pm_\alpha &:~  U_\alpha \times \cC_0^\pm \longrightarrow \psi_\alpha(U_\alpha) \times \R^3_*\\
& (p, v_\pm)  \mapsto(x^\mu, v_\pm^i)\,.
\end{align*}
The admissible changes of coordinates are 
\begin{equation}\label{eqn:CordChanges}
x^{\mu'} =  x^{\mu'}(x), \qquad v_\pm^{i'} = v_\pm^i \, \Lambda_i^{\, \, i'}(x) + \sqrt{v^j_\pm v^k_\pm\delta_{kj}}\, \Lambda_0^{\, \, i'}(x)\,.
\end{equation}
Via inspection, we observe that these coordinate transformations are differentiable and give a bundle atlas.
\end{proof}
Recall that a Weyl transformation is a local rescaling of the metric, i.e., 
$$g \mapsto g' := \rme^{-2 \, f} \, g\,$$
where $f \in C^\infty(M)$ is an arbitrary smooth function. Weyl transformations preserve split null cones at any given point on $M$ as the Weyl factor is strictly positive. Time-orientation is preserved under Weyl transformations.  We are then led to the following.
\begin{observation}
Let $(M, g , \bm{\tau})$ be a spacetime. The null tangent bundle $\cN M$ only depends on the conformal class of $g$.
\end{observation}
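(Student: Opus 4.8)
The plan is to show that, for an arbitrary $f \in C^\infty(M)$, the null tangent bundle built from $g' := \rme^{-2f} g$ coincides with the one built from $g$, both as a set and as a smooth fibre bundle. I would organise this into three checks: invariance of the fibres as sets, invariance of the future/past splitting, and invariance of the bundle atlas.

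First I would verify that the split null cones agree fibrewise. For $p \in M$ and $v \in \sT_p M$ one has $g'_p(v,v) = \rme^{-2f(p)} g_p(v,v)$, and since $\rme^{-2f(p)} > 0$ the vanishing of $g'_p(v,v)$ is equivalent to the vanishing of $g_p(v,v)$. Hence $\cN_p$ is literally the same subset of $\sT_p M$ for $g$ and $g'$, and so is $\cN_p \setminus \{\mathbf{0}_p\}$; this is precisely the remark preceding the statement. Next I would check the time-orientation. Because $g'_p(\bm{\tau},\bm{\tau}) = \rme^{-2f(p)} g_p(\bm{\tau},\bm{\tau}) < 0$, the field $\bm{\tau}$ is still timelike for $g'$ and defines the same time-orientation; and since $g'_p(v,\bm{\tau}|_p) = \rme^{-2f(p)} g_p(v,\bm{\tau}|_p)$ has the same sign as $g_p(v,\bm{\tau}|_p)$, each null vector keeps its future/past label. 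Therefore $\cN M$ and its components $\cN M^\pm$ coincide as sets for $g$ and $g'$.

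The substantive step is to show that the two smooth structures agree, using the vierbein atlas of Proposition \ref{prop:SmoBun}. If $\{e_a\}$ is a $g$-orthonormal frame on a chart $U_\alpha$, then $\{\rme^{f} e_a\}$ is $g'$-orthonormal, since $g'(\rme^{f} e_a, \rme^{f} e_b) = \rme^{-2f}\rme^{2f} g(e_a,e_b) = \eta_{ab}$. Expanding a null vector in both frames, $v = v^a e_a = \bar v^a (\rme^{f} e_a)$, gives $\bar v^a = \rme^{-f} v^a$; in particular the fibre coordinates transform by the strictly positive factor $\rme^{-f(p)}$, so the transition between the chart $\phi^\pm_\alpha$ associated with $g$ and the corresponding chart associated with $g'$ is the fibrewise rescaling $(x^\mu, v^i) \mapsto (x^\mu, \rme^{-f(x)} v^i)$. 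This is smooth with smooth inverse $(x^\mu, \bar v^i) \mapsto (x^\mu, \rme^{f(x)} \bar v^i)$, so the two atlases are compatible and the identity map on the underlying set is a diffeomorphism of total spaces over $M$. Hence $\cN M$ depends only on the conformal class $[g]$.

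The only point needing care is this last transition: one must confirm it is a genuine diffeomorphism rather than merely a continuous bijection. This is exactly where the strict positivity of the Weyl factor is used --- it guarantees $\rme^{-f} > 0$ everywhere, so the rescaling is invertible and the square-root expression $\sqrt{v^j v^k \delta_{kj}}$ appearing in \eqref{eqn:CordChanges} stays smooth on the excised cone. With that observed, the result follows.
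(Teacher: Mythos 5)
Your proof is correct and follows the same line as the paper, which treats this as an immediate consequence of the preceding remark that the Weyl factor $\rme^{-2f}$ is strictly positive and hence preserves each split null cone and the time-orientation. Your additional verification that the two vierbein atlases are compatible --- via the fibrewise rescaling $(x^\mu, v^i) \mapsto (x^\mu, \rme^{-f(x)} v^i)$ induced by passing from a $g$-orthonormal frame to the $g'$-orthonormal frame $\{\rme^{f}e_a\}$ --- is a detail the paper leaves implicit, and it is carried out correctly.
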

The null tangent bundle can, as standard for any fibre, be restricted to an immersed or embedded submanifold $N \subset M$. The definition is clear:
\begin{align*}
\cN M|_N & =  \bigsqcup_{p \in N} \cN_p \setminus \{\mathbf{0}_p\} \\
&= \big\{ (p,v) ~~| ~~ p \in N, ~~ v \in \cN_p \setminus \{ \mathbf{0}_p\} \big\}\,,
\end{align*}
together with the restriction of the natural projection 
\begin{align*}
\pi|_N : & ~\cN M|_N \longrightarrow N\\
& (p,v) \mapsto p\,.
\end{align*}
The local trivialisations of $\cN M$ restrict to local trivialisations
$$\phi_\alpha|_N : ~ (\pi|_N)^{-1}\big(U_\alpha \cap N \big) \longrightarrow (U_\alpha \cap N) \times \R^3_* \sqcup \R^3_*\,.$$
\begin{observation}
As $\cN M$ has disconnected fibres, a smooth (in particular, continuous) global or local section lies in one of the connected components, i.e., is a nowhere vanishing future or past directed null vector field. 
\end{observation}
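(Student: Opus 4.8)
The plan is to run a standard connectedness argument, exploiting the fact that the future and past pieces $\cN M^+$ and $\cN M^-$ are not merely fibrewise disjoint but are each \emph{open} (and hence clopen) in the total space $\cN M$. Once this is in hand, the openness of these sets combined with the connectedness of the base (or of the domain of a local section) forces any continuous section into a single component.

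First I would verify that $\cN M^\pm$ are open in $\cN M$. By Proposition \ref{prop:SmoBun}, each local trivialisation $\phi_\alpha$ identifies $\pi^{-1}(U_\alpha)$ with $\psi_\alpha(U_\alpha) \times (\R^3_* \sqcup \R^3_*)$, and under this identification the future and past pieces correspond to $\psi_\alpha(U_\alpha)$ times the first, respectively the second, copy of $\R^3_*$. Since $\R^3_* \sqcup \R^3_*$ is the disjoint union of two open subsets, each factor is open in the product, so $\cN M^+ \cap \pi^{-1}(U_\alpha)$ and $\cN M^- \cap \pi^{-1}(U_\alpha)$ are open in $\pi^{-1}(U_\alpha)$. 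As the sets $\pi^{-1}(U_\alpha)$ form an open cover of $\cN M$, both $\cN M^+$ and $\cN M^-$ are open. Because $\cN M = \cN M^+ \sqcup \cN M^-$ is a disjoint covering, each piece is the complement of the other and hence also closed; thus both are clopen.

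Next, given a continuous section $s$ over a connected open set $U \subseteq M$ (in particular over all of $M$, which is connected by the definition of a spacetime), I would set $U^\pm := s^{-1}(\cN M^\pm)$. Continuity of $s$ together with the openness of $\cN M^\pm$ makes $U^+$ and $U^-$ open in $U$; moreover $U = U^+ \cup U^-$ since every point of $U$ lands in one fibre component, while $U^+ \cap U^- = \emptyset$ since the fibre components are disjoint. Hence $\{U^+, U^-\}$ partitions the connected set $U$ into two open sets, forcing one of them to be empty. Therefore $s(U)$ lies entirely in $\cN M^+$ or entirely in $\cN M^-$, i.e., $s$ is a nowhere vanishing future-directed null vector field or a nowhere vanishing past-directed null vector field.

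The argument is purely topological and presents no genuine obstacle; the only step requiring care is the first one, namely confirming that the two fibre components glue to genuinely open subsets of the total space rather than being merely disjoint fibre by fibre. This is immediate from the local product structure of Proposition \ref{prop:SmoBun}. For a section over a disconnected open set the same reasoning applies on each connected component, so such a section is future- or past-directed on each component separately.
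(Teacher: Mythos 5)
Your argument is correct and is essentially the standard justification of what the paper states without proof: the paper simply asserts the conclusion from the disjointness $\cN M^+ \cap \cN M^- = \emptyset$, while you correctly supply the one detail that actually matters, namely that $\cN M^\pm$ are \emph{open} (hence clopen) in the total space via the local trivialisations of Proposition \ref{prop:SmoBun}, so that the preimages under a continuous section give an open partition of the connected domain. Your closing caveat --- that a local section over a \emph{disconnected} domain need only be future- or past-directed on each connected component separately --- is a fair and slightly more careful reading than the paper's own phrasing.
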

\subsection{The homothety structure}
Both future and past directed null vectors (at any point) can be rescaled by a non-zero positive number $\lambda \in \R_{>0} := (0, \infty)$ and this does not change the null cone. We thus make the following observation.
\begin{observation}\label{obs:RAction}
Let $M$ be a spacetime. The null tangent bundle $\cN M$ comes with a smooth action of the multiplicative group of (strictly) positive real numbers $(\R_{>0}, \cdot)$
$$\rmh : ~ \R_{>0} \times \cN M \longrightarrow \cN M\,,$$
given in local coordinates by
$$\rmh^*_\lambda (x^\mu, v_\pm^i) = (x^\mu , ~ \lambda \, v_\pm^i)\,.$$
\end{observation}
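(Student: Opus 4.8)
The plan is to verify the four standard requirements for a smooth group action, the only genuinely delicate point being compatibility with the bundle atlas constructed in Proposition \ref{prop:SmoBun}.

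First I would check that fibrewise scaling is well defined, i.e. that $\rmh_\lambda$ really maps $\cN M$ to itself for each $\lambda \in \R_{>0}$. If $v \in \cN_p \setminus \{\mathbf{0}_p\}$ then $g_p(\lambda v, \lambda v) = \lambda^2\, g_p(v,v) = 0$, so $\lambda v$ is again null, and $\lambda v \neq \mathbf{0}_p$ since $\lambda \neq 0$. Time-orientation is preserved because $g_p(\lambda v, \bm{\tau}|_p) = \lambda\, g_p(v, \bm{\tau}|_p)$ has the same sign as $g_p(v, \bm{\tau}|_p)$ for $\lambda > 0$; hence $\rmh_\lambda$ preserves each component $\cN M^{\pm}$ and sends future/past directed null vectors to future/past directed null vectors.

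The key step is to confirm that the local expression $\rmh^*_\lambda(x^\mu, v_\pm^i) = (x^\mu, \lambda v_\pm^i)$ is independent of the chart, i.e. compatible with the coordinate changes \eqref{eqn:CordChanges}. Applying the scaling before transforming gives
\begin{align*}
(\lambda v)_\pm^{i'} &= \lambda v_\pm^i\, \Lambda_i^{\,\,i'}(x) + \sqrt{\lambda^2\, v_\pm^j v_\pm^k \delta_{kj}}\; \Lambda_0^{\,\,i'}(x) \\
&= \lambda\Big( v_\pm^i\, \Lambda_i^{\,\,i'}(x) + \sqrt{v_\pm^j v_\pm^k \delta_{kj}}\; \Lambda_0^{\,\,i'}(x)\Big) = \lambda\, v_\pm^{i'},
\end{align*}
where the crucial identity $\sqrt{\lambda^2\,(\cdots)} = \lambda\sqrt{\cdots}$ uses $\lambda > 0$. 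This is the one place where strict positivity is essential: a negative $\lambda$ would introduce $|\lambda|$ in the radical and break the transformation rule, and would also swap $\cN M^+$ with $\cN M^-$. Thus $(\R_{>0},\cdot)$, rather than a larger multiplicative group such as $(\R\setminus\{0\},\cdot)$, is exactly the structure that acts naturally.

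Finally I would dispatch the remaining axioms. The group-action identities $\rmh_1 = \operatorname{id}$ and $\rmh_\lambda \circ \rmh_\mu = \rmh_{\lambda\mu}$ are immediate from $1\cdot v = v$ and $\lambda(\mu v) = (\lambda\mu) v$ on each fibre. Smoothness of the joint map $\rmh : \R_{>0} \times \cN M \to \cN M$ follows by reading it in the bundle charts, where it becomes $(\lambda, x^\mu, v_\pm^i) \mapsto (x^\mu, \lambda v_\pm^i)$, a polynomial (hence smooth) function of all its arguments. I expect no real obstacle beyond the radical computation above, which is the single subtlety worth emphasising since it is what pins down the structure group as $(\R_{>0}, \cdot)$.
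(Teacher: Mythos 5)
Your proposal is correct and matches the paper's (very brief) justification: the paper simply remarks after the Observation that ``as $\lambda$ is positive and non-zero the action is well-defined, i.e., is consistent with the changes of coordinates,'' which is precisely the radical computation $\sqrt{\lambda^2(\cdots)} = \lambda\sqrt{\cdots}$ you single out as the one subtle step. Your additional checks (preservation of nullity and of the components $\cN M^{\pm}$, the group-action axioms, smoothness in the bundle charts) are routine and fill in details the paper leaves implicit.
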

We remind the reader that $(\R_{>0}, \cdot)$ is a Lie group. Note that as $\lambda$ is positive and non-zero the action is well-defined, i.e., is consistent with the  changes of coordinates (see \eqref{eqn:CordChanges}). Thus, we can view $\cN M$ as a non-polynomial positively graded manifold by assigning weights $\w(x^\mu) = 0$ and $\w(v_\pm^i) = 1$.  The action of $(\R_{>0}, \cdot)$ on $\cN M$ we refer to as the \emph{homothety}. Note that the homothety acts trivially on $M$ and preserves the components $\cN M^\pm$.
\begin{remark}
The null tangent bundle is \emph{not} a graded bundle in the sense of Grabowski and  Rotkiewicz (see \cite{Bruce:2017,Grabowski:2009,Grabowski:2012,Jozwikowski:2016}). In particular, the homothety cannot be extended to a smooth action of the multiplicative semigroup $(\R_{\geq 0}, \cdot)$.  We thus have a more general non-negatively graded manifold than usually considered in the literature, i.e., the coordinate transformations are non-polynomial (see \cite{Voronov:2002} and therein for further references).
\end{remark}
\subsection{Triviality of the null tangent bundle}
It is well known that fibre bundles over contractable spaces are topologically trivial: this directly implies the following.
\begin{observation}
Let $(\R^4, \eta)$ be Minkowski spacetime equipped with its canonical time-orientation. Then $\cN\R^4$ is topologically trivial, i.e., $\cN \R^4 \stackrel{\sim}{\rightarrow} \R^4 \times \cC_0$. 
\end{observation}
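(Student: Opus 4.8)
The plan is to take seriously the remark preceding the statement and deduce triviality from the homotopy invariance of fibre bundles, while also recording the sharper fact that in the Minkowski case the trivialisation is in fact canonical. First I would invoke Proposition \ref{prop:SmoBun} to recall that $\pi : \cN\R^4 \to \R^4$ is a smooth fibre bundle with typical fibre $\cC_0$ and structure group the restricted Lorentz group, the latter being visible in the transition functions \eqref{eqn:CordChanges}. The base $\R^4$ is a smooth manifold, hence paracompact, and it is contractible via the straight-line homotopy $H(p,t) = (1-t)\,p$ onto the origin.

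With these observations in place I would apply the classical classification theorem (Steenrod, Husemoller): over a paracompact base, fibre bundles pulled back along homotopic maps are isomorphic. Letting $c_0 : \R^4 \to \R^4$ denote the constant map onto the origin, the homotopy $H$ shows $\mathrm{id}_{\R^4} \simeq c_0$, and therefore
\[
\cN\R^4 \;=\; \mathrm{id}_{\R^4}^{\,*}\,\cN\R^4 \;\cong\; c_0^{\,*}\,\cN\R^4 \;\cong\; \R^4 \times \cC_0 \,,
\]
where the right-hand bundle is the pullback of $\cN\R^4$ along a constant map, i.e.\ the trivial bundle whose fibre is the split null cone over the origin. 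Since this is an isomorphism of bundles over $\R^4$, it is exactly the asserted topological triviality $\cN\R^4 \stackrel{\sim}{\rightarrow} \R^4 \times \cC_0$.

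I expect the only genuine subtlety to be bookkeeping rather than computation: one must confirm that the hypotheses of the homotopy-invariance theorem are actually satisfied, namely paracompactness of the base (immediate for a manifold) and that $\cN\R^4$ carries a topological structure group acting on the fibre, which is precisely what \eqref{eqn:CordChanges} provides. One should also be careful to read ``topologically trivial'' as a bundle isomorphism over the base, not merely a homeomorphism of total spaces.

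For a cleaner and self-contained argument, however, I would prefer to exploit the flatness of Minkowski spacetime directly, obtaining a result stronger than the statement. In the single global chart of standard affine coordinates the metric $\eta$ is constant, so the vierbein may be chosen globally constant ($\Lambda \equiv \mathrm{id}$); consequently the bundle atlas constructed in Proposition \ref{prop:SmoBun} consists of a single chart and the transition functions in \eqref{eqn:CordChanges} are trivial. The assignment $(p, v_\pm) \mapsto (x^\mu, v_\pm^i)$ then furnishes an explicit global diffeomorphism $\cN\R^4 \cong \R^4 \times \cC_0$, which is a canonical smooth trivialisation and bypasses any appeal to the classification theorem altogether; the abstract route above is then needed only to emphasise that triviality here is a purely topological phenomenon inherited from contractibility of the base.
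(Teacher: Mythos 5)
Your first argument is exactly the paper's: the observation is stated as an immediate consequence of the fact that fibre bundles over contractible bases are trivial, which is all the paper says. Your additional explicit trivialisation via a globally constant vierbein is a correct and slightly stronger refinement (smooth, canonical triviality), but it is not needed to match the paper's reasoning.
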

 This result extends to a large class of spacetimes.
\begin{proposition}
Let $(M, g, \bm{\tau})$ be a spacetime such that $M$ is parallelizable. Then $\cN M$ is topologically trivial, i.e., $\cN M \stackrel{\sim}{\rightarrow} M \times \cC_0$. 
\end{proposition}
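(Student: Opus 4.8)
The plan is to reduce the triviality of $\cN M$ to the triviality of a single rank-three Riemannian vector bundle built from the time-orientation, and then to establish that triviality using parallelizability of $M$ together with the parallelizability of $S^3$. First I would use $\bm{\tau}$ to produce a global orthogonal splitting of the tangent bundle: normalizing, set $e_0 = \bm{\tau}/\sqrt{-g(\bm{\tau},\bm{\tau})}$, a nowhere-vanishing unit timelike future-directed vector field, and let $E := e_0^{\perp}$ be its $g$-orthogonal complement. Since $v\mapsto g(v,e_0)$ is a nowhere-zero $1$-form, $E$ is a smooth rank-three subbundle, and $g$ restricts to a positive-definite metric on it; thus one has a $g$-orthogonal decomposition $\sT M = \langle e_0\rangle \oplus E$ with $\langle e_0\rangle \cong M\times\R$ trivial.

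Second, I would identify $\cN M$ explicitly with two copies of $E$ with its zero section removed. Writing a future-directed null vector as $v = v^0 e_0 + w$ with $w\in E$ and $v^0 = -g(v,e_0)$, the null condition reads $-(v^0)^2 + g(w,w) = 0$ and future-directedness forces $v^0 = \sqrt{g(w,w)} > 0$; hence $v\mapsto w$ is a fibre-preserving diffeomorphism $\cN M^+ \to E\setminus 0$ with smooth inverse $w\mapsto \sqrt{g(w,w)}\,e_0 + w$, and likewise $\cN M^- \cong E\setminus 0$ via $w\mapsto -\sqrt{g(w,w)}\,e_0 + w$. This gives $\cN M \cong (E\setminus 0)\sqcup(E\setminus 0)$. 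Consequently, once $E$ is shown to be trivial we get $E\setminus 0\cong M\times\R^3_*$ and therefore $\cN M\cong M\times(\R^3_*\sqcup\R^3_*) = M\times\cC_0$, which is the assertion.

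Third — the crux — I would prove that $E$ is trivial. From the splitting and parallelizability, $E\oplus(M\times\R)\cong \sT M\cong M\times\R^4$, so $E$ is stably trivial. In $M\times\R^4$ the trivial line summand is the span of a nowhere-vanishing map $u\colon M\to\R^4$; normalizing gives $\hat u\colon M\to S^3$. Since any complement of a subbundle is isomorphic to the quotient bundle, $E$ is isomorphic to the (standard-metric) orthogonal complement of this line, whose fibre over $p$ is $\hat u(p)^{\perp} = \sT_{\hat u(p)}S^3$; hence $E\cong \hat u^{*}(\sT S^3)$. As $S^3$ is a Lie group and therefore parallelizable, $\sT S^3\cong S^3\times\R^3$, so $E\cong \hat u^{*}(S^3\times\R^3)\cong M\times\R^3$ is trivial, completing the argument via the reduction above.

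I expect the third step to be the only genuine obstacle, the first two being merely the "straightening'' of the null cone against the global time direction $e_0$. It is worth flagging why the step is not automatic: the naive idea of orthonormalizing a global frame supplied by parallelizability via Gram--Schmidt fails for an indefinite metric, and stable triviality of a rank-three bundle does not in general force triviality. What rescues the argument is precisely the parallelizability of $S^3$, which trivializes the pullback $\hat u^{*}(\sT S^3)$. Equivalently, this step establishes that a parallelizable, time-oriented Lorentzian manifold admits a global orthonormal vierbein with $e_0$ future-directed, so that the transition functions in \eqref{eqn:CordChanges} may be taken to be the identity and the bundle atlas of Proposition~\ref{prop:SmoBun} collapses to a single global trivialization.
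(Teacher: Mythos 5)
Your proof is correct, and it ends up in the same place as the paper's --- a global $g$-orthonormal frame with $e_0$ aligned with $\bm{\tau}$, against which the null cone is ``straightened'' fibrewise --- but it gets there by a genuinely more careful route. The paper's proof is a two-line affair: it asserts that parallelizability yields a global orthonormal frame $\{e_a\}$ and then writes down exactly the map you obtain from your first two steps, namely $(p,v^i_\pm)\mapsto (p,\pm\sqrt{v^j_\pm v^k_\pm\delta_{kj}}\,e_0|_p+v^i_\pm e_i|_p)$. What the paper does not do is justify that opening assertion, and you are right to flag it: Gram--Schmidt is unavailable for an indefinite metric, and the inclusion $O(1,3)\hookrightarrow GL(4,\R)$ is not a homotopy equivalence, so triviality of the $GL(4,\R)$-frame bundle does not formally hand you a global Lorentz-orthonormal frame. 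Your third step --- $E\oplus\varepsilon^1\cong M\times\R^4$ exhibits $E$ as the pullback of $\sT S^3$ along the normalized section $\hat u\colon M\to S^3$, and $S^3$ is parallelizable --- is a clean and correct way to supply the missing input; it is special to rank three (it would fail for $\sT S^4$-type examples in other ranks) but that is exactly the case at hand. In short: your argument proves a lemma the paper silently assumes, and otherwise coincides with the paper's explicit trivialization; the paper's version buys brevity, yours buys an actual proof that a parallelizable time-oriented Lorentzian $4$-manifold admits a global orthonormal vierbein.
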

\begin{proof}
As $M$ is parallelizable, there exists a global orthonormal frame $\{ e_a\}$ of the module of vector fields. We then define a bundle map
\begin{align*}
\Phi & ~: M \times \cC_0 \longrightarrow \cN M \\
& (p, v_\pm^i) \mapsto (p, \pm \sqrt{v_\pm^j v_\pm^k \delta_{kj}} \, e_0|_p + v^i_\pm \, e_i|_p)\,.  
\end{align*}
It is clear that such a map is a diffeomorphism, thus $\Phi^{-1}: \cN M \stackrel{\sim}{\rightarrow} M \times \cC_0$.
\end{proof} 
\begin{corollary}
If $M$ is parallizable, then the set $\Sec(\cN M)$ is non-empty.
\end{corollary}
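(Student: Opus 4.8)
The plan is to read this off directly from the Proposition immediately above, which, under the hypothesis that $M$ is parallelizable, furnishes a diffeomorphism $\Phi : M \times \cC_0 \to \cN M$ that is moreover a bundle map over the identity of $M$ (i.e.\ $\pi \circ \Phi = \mathrm{pr}_1$). The corollary then reduces to the elementary fact that the trivial bundle $\mathrm{pr}_1 : M \times \cC_0 \to M$ always admits global sections, which are transported back to $\cN M$ through $\Phi$.

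Concretely, first I would fix any point $c \in \cC_0$ — say the point of the future component $\cC_0^+$ with spatial coordinates $(1,0,0) \in \R^3_*$ — and form the constant section
\[
s : M \longrightarrow M \times \cC_0, \qquad p \longmapsto (p, c),
\]
which is manifestly smooth and satisfies $\mathrm{pr}_1 \circ s = \mathrm{id}_M$. Because $\Phi$ covers the identity, the composite $\sigma := \Phi \circ s : M \to \cN M$ satisfies $\pi \circ \sigma = \mathrm{pr}_1 \circ s = \mathrm{id}_M$ and is therefore a global section, giving $\Sec(\cN M) \neq \emptyset$.

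Alternatively, and more explicitly, I would write the section straight from the global orthonormal frame $\{e_a\}$ supplied by parallelizability. Setting
\[
\sigma(p) := e_0|_p + e_1|_p,
\]
the relation $g_p(e_a, e_b) = \eta_{ab}$ gives $g_p(\sigma(p), \sigma(p)) = -1 + 1 = 0$, so $\sigma(p)$ is null; it is nowhere vanishing since $e_0|_p, e_1|_p$ are linearly independent at every $p$; and it is future directed provided $e_0$ is chosen compatibly with the time-orientation $\bm\tau$. Hence $\sigma$ is a smooth nowhere-vanishing future-directed null vector field, i.e.\ an element of $\Sec(\cN M)$.

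There is no genuine obstacle here: all the content is carried by the trivialization $\Phi$ of the preceding Proposition, and the corollary merely records that a trivial bundle has sections. The one point demanding a moment's care is the direction convention — one must check that the chosen constant $c$ (equivalently, the sign of $e_0$) yields a consistently future- or past-directed field — but this is immediate from the chart \eqref{eqn:ConAatlas} together with the definition of future/past directedness.
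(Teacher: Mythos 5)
Your argument is correct and is essentially the one the paper intends: the corollary is stated without proof precisely because it follows immediately from the trivialization $\Phi$ of the preceding Proposition, a trivial bundle always admitting (constant) global sections. Your explicit alternative $\sigma(p) = e_0|_p + e_1|_p$ is a valid and slightly more concrete realisation of the same idea, with the directedness check handled correctly.
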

\begin{example}[The Schwarzchild vacuum]
Consider the manifold $M =  \R \times (1,\infty) \times S^2$ equipped with Schwarzchild coordinates $(t,r , \theta, \varphi)$, where $-\infty  < t < \infty $, $1 < r < \infty$, $0 < \theta < \pi$, and $-\pi < \varphi < \pi$. In natural units $c=G = 1$ and $M = 1\slash 2$, the metric is diagonal and given by
\smallskip
\begin{equation*}
\begin{pmatrix}
 -\left(1-\frac{1}{r} \right) & 0 & 0 & 0\\
0 & \left(1-\frac{1}{r} \right)^{-1} & 0 & 0\\
0 & 0 & r^2 & 0\\
0 & 0 & 0 & r^2 \, \sin^2 \theta
\end{pmatrix}
\end{equation*}
The Schwarzchild radius is then $r_S = 1$. The coordinate vector $\bm{\tau} =  \frac{\partial}{\partial t}$ provides the time-orientation. The vierbein fields are: 
\begin{align*}
& e_t^{\,\, t} =  \frac{1}{\sqrt{1-\frac{1}{r}}}\, , & e_r^{\,\, r} =  \sqrt{1-\frac{1}{r}}\, ,\\
& e_\theta^{\,\, \theta} =  \frac{1}{r}\, , & e_\varphi^{\,\, \varphi}=  \frac{1}{r \, \sin \theta}\,,
\end{align*}
all other fields vanish.  The map $\Phi : M \times \cC_0 \rightarrow \cN M$ is given by
\begin{align*}
(p, v_\pm^r ,v_\pm^\theta, v_\pm^\varphi ) & \mapsto \left(p, \pm \sqrt{(v^r_\pm)^2 + (v^\theta_\pm)^2 + (v^\varphi_\pm)^2} \,\big(\sqrt{1 - 1/r}\big)^{-1} \, \partial_t|_ p \right.\\
& \left.+ v^r_\pm  \, \sqrt{1 - 1/r}\, \partial_r|_p 
 + v^\theta_\pm /r \,\partial_\theta |_p(v^r_\pm)^2 + v^\varphi_\pm /( r\, \sin \theta) \, \partial_\varphi|_p \right)\,.
\end{align*}
\end{example}
\subsection{The canonical distribution}
We will, for simplicity, just consider the future-directed null cone.  We denote the projection as $\pi : \cN M^+ \rightarrow M$. Then $v \in \cN M^+$ is a future-directed (non-zero) null vector and we set $p:= \pi(v) \in M$.  There is then a canonical linear map 
$$\rmd \pi_v :  \sT_v \cN M^+ \longrightarrow \sT_p M\, ,$$
given by 
$$\sT_v \cN M^+ \ni \textrm{x} = \textrm{x}^a e_a|_p + \textrm{x}^i\partial_{v^i}|_v \mapsto \rmd \pi_v(\textrm{x}) =  \textrm{x}^a e_a|_p \in \sT_p M \,.$$
We then construct a one-form via $\Theta_v : \sT_v \cN M^+ \rightarrow \R$, defined as $\Theta_v (\textrm{x}) := g_p\big(v , \rmd \pi_v(\textrm{x}) \big)$. Using vierbeins we can write
$$\Theta_v(\textrm{x}) = {-}\sqrt{v^l v^k \delta_{kl}}\,  \textrm{x}^0 + v^i \textrm{x}^j \delta_{ji}\,.$$
As the metric $g$ is non-degenerate and every $v$ is non-zero, $\Theta$ is nowhere vanishing, and this the kernel defines a distribution of rank $6$, i.e., a vector subbundle of $\sT \cN M^+$.  We are then led to the following definition.
\begin{definition}
Let $(M, g, \bm{\tau})$ be a spacetime. The associated \emph{canonical distribution} is defined as 
$$D :=  \ker ( \Theta ) = \big \{ (v, \mathrm{x}) \in  \sT \cN M^+ ~ |~~\Theta_v(\mathrm{x}) =0  \big\} \subset \sT \cN M^+\,.$$
\end{definition}
As a one-form  (locally) we can use the dual vierbiens (defined by $e^b e_a = e^b_\mu e^\mu_a  = \delta^a_{\, \, b}$) to write
\begin{align*}
\Theta & = - e^0 \, \sqrt{v^l v^k \delta_{kl}} + e^i v^j \delta_{ji}\\
&=  \rmd x^\mu \big(- e_\mu^0(x) \, \sqrt{v^l v^k \delta_{kl}} + e_\mu^i(x)v^j \delta_{ji} \big)\,.
\end{align*}
Changing perspective slightly and thinking of the distribution in terms of vector fields, we have a local basis of the canonical distribution  given by 
\begin{equation*}
 V_i = \frac{\partial}{\partial v^i}\,, \qquad 
X^j = \left( v^j e_0^\mu(x) + \delta^{jk}\sqrt{v^l v^m \delta_{ml}}\, e_k^\mu(x)   \right ) \frac{\partial}{\partial x^\mu}\,.
\end{equation*}
 \subsection{The null tangent prolongation}
 Let $I \subseteq \R$ be a connected nondegenerate\footnote{an interval is nondegenerate if it contains more than one point.}  interval. A curve is an smooth immersion $\gamma : I \rightarrow M$, i.e., the tangent map $\rmd_t \gamma : \sT_t I \rightarrow \sT_{\gamma(t)}M$ is one-to-one.  Recall that a curve $\gamma$  is said to be \emph{regular} if the curve has non-zero derivative at all points. That is, at any point the $\gamma(t)\in  M$ the associated velocity vector is non-zero. We will generally assume that all curves are regular. A curve is said to be a \emph{null curve} if the tangent vectors at all points on the curve are null vectors. A null curve is said to be \emph{future/past directed} if for all points on the curve the associated tangent vectors are future/past directed.
 \begin{definition}
 Let $\gamma : I \rightarrow M $ be a null regular curve. Then the \emph{null tangent prolongation} is the regular curve $ \sss{n t}\gamma : I \rightarrow \cN M$, given in local coordinates 
 $$\sss{n t}\gamma^*(x^\mu , v^i) = \left(x^\mu(t) , \frac{\rmd x^\nu(t)}{\rmd t}\, e_\nu^i\big (x(t) \big) \right)\,.$$
 \end{definition}
 \begin{definition}
 An \emph{implicit null differential equation} is a submanifold $D \subset \cN M$. A \emph{null solution} is a regular null curve whose null tangent prolongation takes values in $D$, i.e., $\sss{n t} \gamma(I) \subset D$. \par 
 If $D$ is the image of a null vector field $\bm{k}$, i.e., $\bm{k}(M) = D$, then we have an \emph{explicit null differential equation}.
 \end{definition}
 \subsection{The partial semiheap structure}
 Although sections - local or global - of the null tangent bundle do not form a vector space or similar, there is a partial ternary operation, in fact, a partial semiheap (see \cite{Hollings:2017} for details of heaps and related ternary structures). We restrict our attention to the global case, and so we consider parallelizable spacetimes in this subsection.
 \begin{theorem}\label{trm:SemiHeap}
 Let $(M, g, \bm{\tau})$ be a  parallelizable spacetime. Then the set of sections $\Sec(\cN M)$ of the null tangent bundle comes canonically equipped with the structure of a partial semiheap given by 
 $$[\bm{u},\bm{v}, \bm{w}] := \bm{u} \, g(\bm{v},\bm{w})\,,$$
 which is only defined when $\bm{v}_p$ and $\bm{w}_p$ are not proportional to each other for all points $p\in M$.
 \end{theorem}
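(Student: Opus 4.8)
The plan is to proceed in three stages: first show that the ternary operation is well defined as a \emph{partial} operation on $\Sec(\cN M)$, then verify the para-associative law \eqref{eqn:ParaAss} by direct computation, and finally check that the three domains of definition genuinely coincide, so that we obtain an honest partial semiheap rather than a merely formal identity.

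For well-definedness the essential input is Lorentzian and comes from the reverse Cauchy--Schwarz inequality for null vectors. I would work in a global orthonormal (vierbein) frame, which exists by parallelizability, and write $\bm{v}|_p = (v^0, \vec v)$ and $\bm{w}|_p = (w^0, \vec w)$ with $(v^0)^2 = |\vec v|^2$ and $(w^0)^2 = |\vec w|^2$. Taking both future-directed (so $v^0, w^0 > 0$), ordinary Cauchy--Schwarz gives $|\vec v \cdot \vec w| \le |\vec v|\,|\vec w| = v^0 w^0$, whence $g_p(\bm v, \bm w) = -v^0 w^0 + \vec v \cdot \vec w \le 0$, with equality precisely when $\bm v|_p$ and $\bm w|_p$ are proportional; the remaining relative orientations are handled by replacing $\bm v$ or $\bm w$ by its negative. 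Consequently, under the standing hypothesis that $\bm v_p$ and $\bm w_p$ are nowhere proportional, the function $g(\bm v, \bm w) \in C^\infty(M)$ is nowhere zero (indeed of constant sign, fixed by the relative time-orientations of $\bm v$ and $\bm w$, since each section lies entirely in one component of $\cN M$). As $\bm u$ is a nowhere-vanishing null vector field and rescaling by a nowhere-zero smooth function preserves both nullity and non-vanishing, $[\bm u, \bm v, \bm w] = g(\bm v, \bm w)\,\bm u$ is again a smooth section of $\cN M$.

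The para-associative law is then a short computation. Writing the bracket as $[\bm u, \bm v, \bm w] = g(\bm v, \bm w)\,\bm u$ and using bilinearity of $g$ to extract the scalar factors, together with symmetry $g(\bm c, \bm b) = g(\bm b, \bm c)$ to treat the middle term, each of the three expressions $\big[[\bm a, \bm b, \bm c], \bm d, \bm e\big]$, $\big[\bm a, [\bm d, \bm c, \bm b], \bm e\big]$ and $\big[\bm a, \bm b, [\bm c, \bm d, \bm e]\big]$ reduces to $g(\bm b, \bm c)\, g(\bm d, \bm e)\, \bm a$. I anticipate no difficulty here beyond keeping track of which pair enters each inner product.

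The point requiring genuine care --- and the step I regard as the real content beyond formal manipulation --- is the matching of domains, since para-associativity for a partial operation must assert that all three sides are defined together. The crucial observation is that each inner bracket is pointwise proportional to its first slot: $[\bm d, \bm c, \bm b] = g(\bm c, \bm b)\,\bm d$ is a nowhere-zero rescaling of $\bm d$, and $[\bm c, \bm d, \bm e] = g(\bm d, \bm e)\,\bm c$ is a nowhere-zero rescaling of $\bm c$. Because proportionality of vectors is unaffected by rescaling by a nowhere-zero function, the definedness conditions for all three expressions collapse to the same pair of requirements, namely $\bm b_p \not\parallel \bm c_p$ and $\bm d_p \not\parallel \bm e_p$ for all $p \in M$. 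Hence the three domains coincide, and on their common domain the computation above yields equality, completing the verification that $\big(\Sec(\cN M), [\,\cdot\,, \cdot\,, \cdot\,]\big)$ is a partial semiheap.
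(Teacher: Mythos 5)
Your proof is correct and follows essentially the same route as the paper: the key point in both is that two nonzero null vectors have vanishing inner product only when they are proportional, so pointwise non-proportionality makes $g(\bm{v},\bm{w})$ nowhere vanishing (and of constant sign), after which para-associativity is the same one-line computation. You supply two details the paper leaves implicit --- the reverse Cauchy--Schwarz argument behind the pointwise fact, and the check that the three domains of definition coincide because each inner bracket is a nowhere-zero rescaling of its first slot --- and both additions are sound.
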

\begin{proof}
The condition that the vectors $\bm{v}_p$ and $\bm{w}_p$ not be proportional to each other at all points $p \in M$ ensures, as we are in four dimensions, that $g(\bm{v},\bm{w})\in C^\infty(M)$  is nowhere vanishing -  via the intermediate value theorem we know that this function does not change sign.  We observe that $\Sec(\cN M)$ is closed under the ternary operation (when it is defined). \par 
The para-associativity (see \eqref{eqn:ParaAss}) follows directly by calculation, i.e.,
$$[[\bm{u}_1, \bm{u}_2, \bm{u}_3], \bm{u}_4, \bm{u}_5] =[\bm{u}_1, [\bm{u}_4, \bm{u}_3, \bm{u}_2], \bm{u}_5] = [\bm{u}_1, \bm{u}_2,[ \bm{u}_3, \bm{u}_4, \bm{u}_5]] =  \bm{u}_1 g(\bm{u}_2, \bm{u}_3)g(\bm{u}_4, \bm{u}_5)\,,$$
where $\bm{u}_2$ and $\bm{u}_3$, together with $\bm{u}_4$ and $\bm{u}_5$, are not proportional at all points. 
\end{proof}
\begin{corollary}
For any point $p \in M$, the split null cone $\cN_p \setminus \{\mathbf{0}_p\}$ has the structure of a partial semiheap.
\end{corollary}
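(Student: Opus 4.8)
The plan is to transport the construction of Theorem~\ref{trm:SemiHeap} down from sections to a single fibre, observing that at a fixed point everything is pointwise linear algebra in the Lorentzian vector space $(\sT_p M, g_p)$. Concretely, fix $p \in M$ and define, for $u, v, w \in \cN_p \setminus \{\mathbf{0}_p\}$,
$$[u,v,w] := u \, g_p(v,w)\,,$$
declared to be defined precisely when $v$ and $w$ are not proportional. Since neither parallelizability nor smoothness plays any role at a single point, no global hypotheses on $M$ are required; the corollary is really a statement about the null cone in one copy of Minkowski space.

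First I would verify that the bracket is well defined as a map into $\cN_p \setminus \{\mathbf{0}_p\}$. Nullity is immediate, since $g_p(\lambda u, \lambda u) = \lambda^2 \, g_p(u,u) = 0$ for every scalar $\lambda$. For the result to be nonzero we need the scalar $g_p(v,w)$ to be nonzero, and this is exactly where the Lorentzian signature enters: two null vectors are $g_p$-orthogonal if and only if they are proportional. Indeed, were $v$ and $w$ null, orthogonal, and linearly independent, then $g_p$ would vanish identically on the $2$-plane they span, producing a totally isotropic subspace of dimension $2$ and contradicting the fact that $(\sT_p M, g_p)$ has Witt index $1$. The non-proportionality hypothesis therefore forces $g_p(v,w) \neq 0$, so $[u,v,w]$ is a nonzero null vector. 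One should note that $g_p(v,w)$ may be negative, so the bracket can interchange the components $\cC_0^+$ and $\cC_0^-$; this is harmless, as $\cN_p \setminus \{\mathbf{0}_p\}$ contains both.

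Next I would confirm that the three expressions in the para-associative law~\eqref{eqn:ParaAss} share a common domain, so that the partial identity is meaningful. A brief inspection shows that each of $[[u_1,u_2,u_3],u_4,u_5]$, $[u_1,[u_4,u_3,u_2],u_5]$ and $[u_1,u_2,[u_3,u_4,u_5]]$ is defined exactly when $u_2,u_3$ are not proportional and $u_4,u_5$ are not proportional, because every inner bracket is a nonzero scalar multiple of one of its own arguments and proportionality is insensitive to such rescaling. The para-associativity then follows by the identical computation to the one in Theorem~\ref{trm:SemiHeap}: expanding each side and using the symmetry of $g_p$ gives the common value $u_1 \, g_p(u_2,u_3)\, g_p(u_4,u_5)$.

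The only genuine obstacle is the well-definedness step, namely the claim that orthogonality of two null vectors forces their proportionality; this is precisely the pointwise content underlying the intermediate value theorem argument used in the theorem, and once it is in hand the remaining verifications are routine.
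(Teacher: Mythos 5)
Your proposal is correct and follows essentially the same route as the paper, which states the corollary as an immediate fibrewise restriction of the ternary product in Theorem~\ref{trm:SemiHeap}. Your explicit justification that two non-proportional null vectors cannot be $g_p$-orthogonal (via the Witt index of a Lorentzian form) is exactly the pointwise content the paper leaves implicit, and your check that the three para-associativity expressions share a common domain is a welcome, if routine, addition.
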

\begin{corollary}
By fixing some section $\bm{w} \in \Sec(\cN M)$, we have an associative partial binary product 
$$\bm{u}_1 \bullet_{\bm{w}} \bm{u}_2 := [\bm{u}_1, \bm{w}, \bm{u}_2] = \bm{u}_1 g(\bm{w}, \bm{u}_2)\,,$$
that is, we have the structure of a partial semigroup.
\end{corollary}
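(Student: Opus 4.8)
The plan is to deduce this corollary directly from the partial semiheap structure established in Theorem \ref{trm:SemiHeap}, by specialising the two inner slots of the ternary bracket to the fixed section $\bm{w}$ and reading off associativity as an instance of the para-associative law \eqref{eqn:ParaAss}. Concretely, with $\bm{u}_1 \bullet_{\bm{w}} \bm{u}_2 := [\bm{u}_1, \bm{w}, \bm{u}_2]$ I would prove the corollary in two moves: first the algebraic identity (associativity), which is essentially free from para-associativity, and second a check that the partiality of the operation is consistent, i.e.\ that both sides of the associativity equation are defined under exactly the same conditions.

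For the algebraic core, I would take the para-associative law in the form of \eqref{eqn:ParaAss} with the two middle entries set equal to $\bm{w}$. The outer equality then reads
\[
\big[[\bm{u}_1, \bm{w}, \bm{u}_2], \bm{w}, \bm{u}_3\big] = \big[\bm{u}_1, \bm{w}, [\bm{u}_2, \bm{w}, \bm{u}_3]\big]\,,
\]
which, after substituting the definition of $\bullet_{\bm{w}}$, is precisely
\[
(\bm{u}_1 \bullet_{\bm{w}} \bm{u}_2)\bullet_{\bm{w}}\bm{u}_3 = \bm{u}_1 \bullet_{\bm{w}}(\bm{u}_2 \bullet_{\bm{w}} \bm{u}_3)\,.
\]
Both composites reduce to $\bm{u}_1\, g(\bm{w}, \bm{u}_2)\, g(\bm{w}, \bm{u}_3)$, which I would confirm by pulling the scalar function $g(\bm{w}, \bm{u}_3)$ out of the inner slot using bilinearity of $g$. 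Closure then comes for free from the corresponding statement in the proof of Theorem \ref{trm:SemiHeap}: the product is again a nowhere-vanishing null vector field of definite time-orientation, hence a section of $\cN M$.

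The one point requiring genuine care --- and which I regard as the main obstacle --- is matching the domains of definition in the partial setting, since associativity for a partial operation only asserts equality when both sides are defined. The condition for $\bm{u}_1 \bullet_{\bm{w}} \bm{u}_2$ to exist is that $\bm{w}_p$ and $(\bm{u}_2)_p$ be non-proportional for all $p$, and I would show that this single type of condition governs both composites. The key observation is that $\bm{u}_2 \bullet_{\bm{w}} \bm{u}_3 = \bm{u}_2\, g(\bm{w}, \bm{u}_3)$ is a rescaling of $\bm{u}_2$ by a nowhere-vanishing function (constant sign, by the intermediate-value argument used in Theorem \ref{trm:SemiHeap}), so it is proportional to $\bm{w}_p$ at a point exactly when $(\bm{u}_2)_p$ is. Consequently both the left- and right-associated products are defined precisely when $\bm{w}$ is non-proportional to $\bm{u}_2$ and to $\bm{u}_3$ at every point, and under this common hypothesis the two sides coincide. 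This yields the asserted associative partial binary product, and hence the partial semigroup structure.
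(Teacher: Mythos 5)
Your proposal is correct and follows essentially the same route as the paper, which presents this corollary as an immediate specialisation of Theorem \ref{trm:SemiHeap}: setting both middle slots of the ternary bracket to $\bm{w}$ in the para-associative law \eqref{eqn:ParaAss} yields associativity, with both composites reducing to $\bm{u}_1\, g(\bm{w},\bm{u}_2)\, g(\bm{w},\bm{u}_3)$. Your careful matching of the domains of definition --- noting that $\bm{u}_2\, g(\bm{w},\bm{u}_3)$ is proportional to $\bm{w}$ at a point exactly when $\bm{u}_2$ is, since the scalar factor is nowhere vanishing --- is a detail the paper leaves implicit, and it is handled correctly.
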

\begin{example}
PP-wave spacetimes are spacetimes that admit a covariantly constant nowhere vanishing null vector field $\bm{k} \in \Sec(\cN M)$. The time orientation is chosen so that $\bm{k}$ is future-directed. Hence, PP-wave spacetimes naturally come with a partial semigroup structure on the set of nowhere vanishing null vector fields. 
\end{example}
\begin{example}
Robinson manifolds are Lorentzian manifolds that come equipped with a nowhere vanishing null vector field $\bm{k} \in \Sec(\cN M)$, whose integral curved are null geodesics. Hence, Robinson manifolds naturally come with a  partial semigroup structure on the set of nowhere vanishing null vector fields.
\end{example}
 \begin{remark}
 Let $(M, g, \bm{\tau})$ be a spacetime, then we have a semiheap structure on the set of vector fields, in fact, we have a para-associative ternary algebra (see \cite{Bruce:2022,Kerner:2008,Kerner:2018}). The ternary product is given by $[\bm{x}, \bm{y}, \bm{z}] := \bm{x} g(\bm{y}, \bm{z})$. As we have a given privileged vector field, there is an associative noncommutative algebra on the set of vector fields, given by   $\bm{x} \bullet_{\bm{\tau}} \bm{y} := [\bm{x}, \bm{\tau}, \bm{y}] = \bm{x} g(\bm{\tau}, \bm{y})$.
 \end{remark}
 Associated with any section $\bm{u}\in \Sec(\cN M)$ is a null congruence, i.e., a set of integral curves.  Note that if $f \in C^\infty(M)$ is a nowhere vanishing function, then $f \bm{u}$ generates the same null congruence. We can then define a partial semi-heap structure on the set of null congruences via their generating nowhere vanishing null vectors. 
\begin{observation}
Let $\bm{u}, \bm{v}$ and $\bm{w} \in \Sec(\cN M)$, and $f \in C^\infty(M)$ be nowhere vanishing. Then $f \bm{u}, f^{-1}\bm{v}$ and $ f \bm{w} \in\Sec(\cN M)$, and we have a module-like distribution rule
$$f[\bm{u}, \bm{v}, \bm{w}] = [f \bm{u}, f^{-1}\bm{v}, f \bm{w}]\,,$$
when $[\bm{u}, \bm{v}, \bm{w}]$ is defined.
\end{observation}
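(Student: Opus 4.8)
The plan is to verify the two assertions in turn, both of which reduce to the $C^\infty(M)$-bilinearity of the metric together with the explicit form $[\bm{u},\bm{v},\bm{w}] = \bm{u}\, g(\bm{v},\bm{w})$ of the ternary product. First I would confirm that the three rescaled fields are again sections. Since $f$ is nowhere vanishing, $f^{-1} \in C^\infty(M)$ is well defined and also nowhere vanishing, so each of $f\bm{u}$, $f^{-1}\bm{v}$, and $f\bm{w}$ is a smooth, nowhere vanishing vector field; nullness is preserved under scaling by a function because $g(f\bm{u}, f\bm{u}) = f^2\, g(\bm{u},\bm{u}) = 0$, and likewise for the other two. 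By the earlier Observation that any smooth nowhere vanishing null vector field lies in a single connected component $\cN M^\pm$, each rescaled field is automatically an element of $\Sec(\cN M)$.

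Next I would establish the identity by direct computation. Expanding the right-hand side using the definition of the product and pulling the scalar factors through the metric gives
$$[f\bm{u}, f^{-1}\bm{v}, f\bm{w}] = (f\bm{u})\, g(f^{-1}\bm{v}, f\bm{w}) = (f\bm{u})\,\big(f^{-1} f\big)\, g(\bm{v},\bm{w}) = f\,\big(\bm{u}\, g(\bm{v},\bm{w})\big) = f\,[\bm{u},\bm{v},\bm{w}]\,,$$
the cancellation $f^{-1} f = 1$ being exactly what produces the module-like rule. I would then check that the two sides are defined on the same domain: the left-hand product requires $f^{-1}\bm{v}_p$ and $f\bm{w}_p$ to be non-proportional for all $p$, but scaling by the nonzero numbers $f^{-1}(p)$ and $f(p)$ never affects proportionality, so this is equivalent to $\bm{v}_p$ and $\bm{w}_p$ being non-proportional for all $p$, which is precisely the condition for $[\bm{u},\bm{v},\bm{w}]$ to be defined.

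There is no genuine obstacle in this argument. The only point requiring a moment's care is the direction (future versus past) of the rescaled fields in the first step: since $M$ is connected and $f$ is nowhere vanishing, $f$ has constant sign, so $f\bm{u}$ and $f\bm{w}$ stay in a single cone component and $f^{-1}\bm{v}$ likewise, meaning the rescaled objects really are sections and not merely nowhere vanishing null fields that straddle both cones. This is what guarantees consistency with the coordinate changes \eqref{eqn:CordChanges} and lets the equality be read inside $\Sec(\cN M)$.
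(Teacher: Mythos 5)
Your proof is correct and follows exactly the reasoning the paper leaves implicit: the paper states this as an Observation with no written proof, the content being the $C^\infty(M)$-bilinearity of $g$ and the cancellation $f^{-1}f=1$. Your additional checks (that the rescaled fields remain sections, and that the domains of definition of the two sides agree) are accurate and fill in the omitted details faithfully.
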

The multiplicative group of nowhere vanishing functions on a spacetime, we denote the underlying set as  $C_{inv}^\infty(M)$, has the structure of heap with the ternary product being given by 
$$\{f_1, f_2, f_3 \} := f_1 f_2^{-1} f_3\,.$$
Note that $\{f_1, f_2, f_3 \}^{-1} =f_1^{-1} f_2 f_3^{-1}\,. $
Using previous observation we are led to the following.
\begin{proposition}
Let $(M, g, \bm{\tau})$ be a spacetime. Then the partial semiheap structure on $\Sec(\cN M)$ and the heap structure on $C_{inv}^\infty(M)$ satisfy the following distribution rule (when defined)
$$\{f_1, f_2, f_3 \} [\bm{u}, \bm{v}, \bm{w}] = [\{f_1, f_2, f_3 \}\bm{u}, \{f_1, f_2, f_3 \}^{-1}\bm{v}, \{f_1, f_2, f_3 \}\bm{w}]\,, $$
for all $f_1, f_2, f_3 \in C_{inv}^\infty(M)$ and $\bm{u}, \bm{v}, \bm{w} \in \Sec(\cN M)$ where $\bm{v}_p$ and $\bm{w}_p$ are not proportional to each other for all points $p\in M$.
\end{proposition}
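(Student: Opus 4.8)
The plan is to reduce the claimed distribution rule to the Observation immediately preceding it, by recognising that the heap product $\{f_1, f_2, f_3\}$ is itself a single nowhere vanishing function. First I would note that, since $C_{inv}^\infty(M)$ is closed under pointwise multiplication and inversion, the element $\{f_1, f_2, f_3\} = f_1 f_2^{-1} f_3$ again lies in $C_{inv}^\infty(M)$; in particular it is a nowhere vanishing smooth function on $M$. Setting $f := \{f_1, f_2, f_3\}$, the previous Observation applies verbatim and gives $f[\bm{u}, \bm{v}, \bm{w}] = [f\bm{u}, f^{-1}\bm{v}, f\bm{w}]$, which upon substituting back the expression for $f$ and using $\{f_1, f_2, f_3\}^{-1} = f_1^{-1} f_2 f_3^{-1}$ is exactly the desired identity.

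Should a self-contained argument be preferred, I would instead verify the identity by direct computation, exploiting the $C^\infty(M)$-bilinearity of $g$. Writing $h := \{f_1, f_2, f_3\}$ for brevity, the right-hand side unfolds as
\[
[h\bm{u}, h^{-1}\bm{v}, h\bm{w}] = (h\bm{u})\, g(h^{-1}\bm{v}, h\bm{w}) = h\bm{u}\cdot h^{-1} h\, g(\bm{v}, \bm{w}) = h\, \bm{u}\, g(\bm{v}, \bm{w}),
\]
which is precisely $h[\bm{u}, \bm{v}, \bm{w}]$, the left-hand side; the scalar factors $h^{-1}$ and $h$ cancel because $g$ is tensorial in each argument. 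Along the way I would record the well-definedness bookkeeping: each of $h\bm{u}$, $h^{-1}\bm{v}$, $h\bm{w}$ remains a nowhere vanishing null vector field, hence a genuine element of $\Sec(\cN M)$ (rescaling by a nonzero scalar preserves nullity, and $h$ has constant sign on the connected manifold $M$ by the intermediate value theorem), and that multiplying $\bm{v}, \bm{w}$ by nonzero functions does not affect their proportionality, so Theorem~\ref{trm:SemiHeap} guarantees both brackets are defined under the stated hypothesis.

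There is no genuine obstacle here: the content is carried entirely by the preceding Observation together with the tensoriality of $g$. The single point deserving explicit mention — rather than being left implicit — is that the heap operation outputs an element of $C_{inv}^\infty(M)$, which is exactly what licenses the invocation of the single-function Observation and collapses the proof to a one-line substitution.
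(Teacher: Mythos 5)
Your proposal is correct and matches the paper's route exactly: the paper derives this proposition directly from the preceding Observation by taking $f = \{f_1, f_2, f_3\}$, which is precisely your first reduction. The additional direct computation via the tensoriality of $g$ is a harmless elaboration of the same idea.
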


\section{Final remarks}
We have constructed the null tangent bundle and explored some of its immediate mathematical properties. Interestingly, we have a generalisation of a graded bundle in which the coordinate transformations are not polynomial. Moreover, the sections of the null tangent bundle, so nowhere vanishing null vector fields, come with the structure of a partial semiheap. These observations suggest that bundles of semiheaps and wider classes of graded manifolds should be studied. Particular focus should be on finding more geometric examples of ternary structures. 

\section*{Acknowledgements} 
The author thanks Janusz Grabowski, Steven Duplij and Damjan Pi\v{s}talo for their comments on earlier drafts of this paper.  

\end{document}